\tikzstyle{punkt}=[circle, fill=black, minimum size=1mm,inner sep=0pt, draw]
\def\frk{\mathfrak}               
\def\Phi{{\frk N}}
\def\opn#1#2{\def#1{\operatorname{#2}}} 
\opn\chara{char} \opn\length{\ell} \opn\pd{pd} \opn\rk{rk}
\opn\projdim{proj\,dim} \opn\injdim{inj\,dim} \opn\rank{rank}
\opn\depth{depth} \opn\grade{grade} \opn\height{height}
\opn\embdim{emb\,dim} \opn\codim{codim}
\opn\Tr{Tr} \opn\bigrank{big\,rank}
\opn\superheight{superheight}\opn\lcm{lcm}
\opn\trdeg{tr\,deg}
\opn\reg{reg} \opn\lreg{lreg} \opn\ini{in} \opn\lpd{lpd}
\opn\size{size}\opn{\mult}{mult}
\opn\div{div} \opn\Div{Div} \opn\cl{cl} \opn\Cl{Cl}
\opn\Spec{Spec} \opn\Supp{Supp} \opn\supp{supp} \opn\Sing{Sing}
\opn\Ass{Ass} \opn\Min{Min}
\opn\Ann{Ann} \opn\Rad{Rad} \opn\Soc{Soc}
\opn\Syz{Syz} \opn\Im{Im} \opn\Ker{Ker} \opn\Coker{Coker}
\opn\Am{Am} \opn\Hom{Hom} \opn\Tor{Tor} \opn\Ext{Ext}
\opn\End{End} \opn\Aut{Aut} \opn\id{id} \opn\ini{in}
\opn\nat{nat}
\opn\pff{pf}
\opn\Pf{Pf} \opn\GL{GL} \opn\SL{SL} \opn\mod{mod} \opn\ord{ord}
\opn\Gin{Gin}
\opn\Hilb{Hilb}\opn\adeg{adeg}\opn\std{std}\opn\ip{infpt}
\opn\Pol{Pol}
\opn\sat{sat}
\opn\Var{Var}
\opn\Gen{Gen}
\opn\aff{aff} \opn\con{conv} \opn\relint{relint} \opn\st{st}
\opn\lk{lk} \opn\cn{cn} \opn\core{core} \opn\vol{vol}
\opn\link{link} \opn\star{star}
\opn\gr{gr}
\def\pot#1#2{#1[\kern-0.28ex[#2]\kern-0.28ex]}
\opn\dirlim{\underrightarrow{\lim}}
\opn\inivlim{\underleftarrow{\lim}}
\def\Implies{\ifmmode\Longrightarrow \else
        \unskip${}\Longrightarrow{}$\ignorespaces\fi}
\def\implies{\ifmmode\Rightarrow \else
        \unskip${}\Rightarrow{}$\ignorespaces\fi}
\def\iff{\ifmmode\Longleftrightarrow \else
        \unskip${}\Longleftrightarrow{}$\ignorespaces\fi}
\newtheorem{Theorem}{Theorem}[section]
\newtheorem{Lemma}[Theorem]{Lemma}
\newtheorem{Corollary}[Theorem]{Corollary}
\newtheorem{Remark}[Theorem]{Remark}
\newtheorem{Example}[Theorem]{Example}
\let\epsilon\varepsilon
\let\phi=\varphi
\let\kappa=\varkappa
\def\qed{\ifhmode\textqed\fi
      \ifmmode\ifinner\quad\qedsymbol\else\dispqed\fi\fi}
\def\textqed{\unskip\nobreak\penalty50
       \hskip2em\hbox{}\nobreak\hfil\qedsymbol
       \parfillskip=0pt \finalhyphendemerits=0}
\def\dispqed{\rlap{\qquad\qedsymbol}}
\opn\dist{dist}
\def\pnt{{\raise0.5mm\hbox{\large\bf.}}}
\opn\Lex{Lex}
\opn\diam{diam}
\begin{document}
\title{Diameter and connectivity of finite simple graphs}
\author[T.~Hibi]{Takayuki Hibi}
\address[Takayuki Hibi]
{Department of Pure and Applied Mathematics, 
Graduate School of Information Science and Technology, 
Osaka University, 
Suita, Osaka 565-0871, Japan}
\email{hibi@math.sci.osaka-u.ac.jp}
\author[S.~S.~Madani]{Sara Saeedi Madani}
\address[Sara Saeedi Madani]
{Department of Mathematics and Computer Science, Amirkabir University of Technology, Tehran, Iran, and School of Mathematics, Institute for Research in Fundamental Sciences, Tehran, Iran} 
\email{sarasaeedi@aut.ac.ir}
\subjclass[2010]{Primary 05E40; Secondary 05C75}
\keywords{Finite simple graphs, binomial edge ideals, vertex connectivity, diameter, free vertices.}
\thanks{
The first author was partially supported by JSPS KAKENHI 19H00637. The research of the second author was in part supported by a grant from IPM (No. 99130113).
}
\begin{abstract}
Let $G$ be a finite simple non-complete connected graph on $\{1, \ldots, n\}$ and $\kappa(G) \geq 1$ its vertex connectivity.  Let $f(G)$ denote the number of free vertices of $G$ and $\diam(G)$ the diameter of $G$.  Being motivated by the computation of the depth of the binomial edge ideal of $G$, the possible sequences $(n, q, f, d)$ of integers for which there is a finite simple non-complete connected graph $G$ on $\{1, \ldots, n\}$ with $q = \kappa(G), f = f(G), d = \diam(G)$ satisfying $f + d = n + 2 - q$ will be determined. Furthermore, finite simple non-complete connected graphs $G$ on $\{1, \ldots, n\}$ satisfying $f(G) + \diam(G) = n + 2 - \kappa(G)$ will be classified. 
\end{abstract}
\maketitle
\section*{Introduction}

Let $G$ be a finite simple connected graph on the vertex set $[n] = \{1, \ldots, n\}$ and the edge set $E(G)$.  The {\em distance} $\dist_G(i,j)$ of $i$ and $j$ in $[n]$ with $i \neq j$ is the smallest length of paths connecting $i$ and $j$ in $G$.  Especially, if $\{i, j\} \in E(G)$, then $\dist_G(i,j) = 1$.  The {\em diameter} of $G$ is 
\[
\diam(G) = \max \{\dist_G(i,j) : i, j \in [n] \}.
\]
The {\em induced subgraph} of $G$ on $T \subset [n]$ is the subgraph $G_T$ of $G$ on the vertex set $T$ whose edges are those $\{i, j\} \in E(G)$ with $ i \in T$ and $j \in T$.  Let $\kappa(G) \geq 1$ denote the {\em vertex connectivity} of $G$.  In other words, $\kappa(G)$ is the smallest cardinality of $T \subset [n]$ for which 
$G_{[n]\setminus T}$ is disconnected.  A vertex $i \in [n]$ is {\em free} if $\{i, j\} \in E(G)$ and $\{i, j'\} \in E(G)$ with $j \neq j'$, yield $\{j, j'\} \in E(G)$. In particular, a free vertex belongs to exactly one maximal clique (i.e., complete subgraph) of $G$. Let $f(G)$ denote the number of free vertices of $G$. 

Let $S = K[x_1, \ldots, x_n, y_1, \ldots, y_n]$ denote the polynomial ring in $2n$ variables over a field $K$ and let 
\[
J_G = (x_iy_j - x_jy_i : \{i, j\} \in E(G))
\]
be the {\em binomial edge ideal} of $G$, which was introduced in \cite{HHHKR} and \cite{Ohtani} independently.  It follows from \cite{BN} and \cite{RSK} that, when $G$ is non-complete, one has
\[
f(G) + \diam(G) \leq \depth(S/J_G) \leq n + 2 - \kappa(G),
\]
where $\depth(S/J_G)$ is the depth of $S/J_G$.  It is then a reasonable problem to find those finite simple non-complete connected graphs $G$ on $[n]$ for which  
\[
f(G) + \diam(G) = n + 2 - \kappa(G).
\]

In the present paper, the possible sequences $(n, q, f, d)$ of integers for which there is a finite simple non-complete connected graph $G$ on $[n]$ with $q = \kappa(G)$, $f = f(G)$ and $d = \diam(G)$ satisfying $f + d = n + 2 - q$ will be determined. Furthermore, finite simple non-complete connected graphs $G$ on $[n]$ satisfying $f(G) + \diam(G) = n + 2 - \kappa(G)$ will be classified.

Throughout this paper, all graphs are finite and simple, and we denote the vertex set and the edge set of a finite simple graph $G$ by $V(G)$ and $E(G)$, respectively, unless we mention something else. 

\section{Sequences of integers}
The possible sequences $(n, q, f, d)$ of integers for which there is a finite simple non-complete connected graph $G$ on $[n] = \{1, \ldots, n\}$ with $q = \kappa(G)$, $f = f(G)$ and $d = \diam(G)$ satisfying $f + d = n + 2 - q$ will be determined. 
 
\begin{Lemma}
\label{tehran}
Let $G$ be a finite simple connected graph on $[n]$ with $\mathrm{diam}(G)\geq 3$ and $\kappa(G)\geq 2$. Then
\[
f(G) + \diam(G) < n + 2 - \kappa(G).
\] 
\end{Lemma}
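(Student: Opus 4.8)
The plan is to recast the statement as a lower bound on the number of \emph{non-free} vertices. Recall that a vertex is free precisely when it is simplicial, i.e.\ its neighbourhood is a clique; equivalently, a vertex is non-free exactly when it has two non-adjacent neighbours. Writing $d = \diam(G)$, $q = \kappa(G)$, and letting $s(G) = n - f(G)$ denote the number of non-free vertices, the inequality $f(G) + d < n + 2 - q$ is equivalent to $s(G) \ge d + q - 1$. I would in fact prove the sharper bound $s(G) \ge 2q + d - 3$, which suffices because $2q + d - 3 = (d + q - 1) + (q - 2)$ and $q \ge 2$.

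Two elementary ingredients drive the argument. First, fix a shortest path $P : w_0, w_1, \dots, w_d$ with $\dist_G(w_0, w_d) = d$. For $1 \le i \le d-1$ the vertices $w_{i-1}$ and $w_{i+1}$ are non-adjacent, since otherwise $P$ could be shortened; hence each interior vertex $w_i$ has two non-adjacent neighbours and is non-free. Second, I would use that any set of vertices separating $w_0$ from $w_d$ has at least $q$ elements, because $\kappa(G) = q$.

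The naive combination of these facts is not enough, and this is the real obstacle. A single minimum separator near $w_0$ (say the neighbours of $w_0$ admitting an edge one step farther away) gives $q$ non-free vertices, but it must meet the interior of $P$ in $w_1$, so the total is only $(d-1) + q - 1 = d + q - 2$, merely reproducing the known non-strict bound. To recover the missing vertex I would exploit \emph{both} ends of $P$. Using the distance layers from $w_0$, set $A = \{v : \dist_G(w_0,v) = 1,\ v \text{ has a neighbour at distance } 2 \text{ from } w_0\}$, and symmetrically $B = \{v : \dist_G(w_d,v) = 1,\ v \text{ has a neighbour at distance } 2 \text{ from } w_d\}$. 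Each vertex of $A$ is adjacent to $w_0$ and to a vertex at distance $2$ from $w_0$, and these two neighbours are non-adjacent, so every vertex of $A$ is non-free; likewise for $B$. Moreover $A$ separates $w_0$ from $w_d$, since every $w_0$--$w_d$ path must cross from the distance-$1$ layer to the distance-$2$ layer and only vertices of $A$ admit such a step; hence $|A| \ge q$, and symmetrically $|B| \ge q$.

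It then remains to count $A$, $B$, and the middle interior vertices $w_2, \dots, w_{d-2}$, and to verify pairwise disjointness; this bookkeeping is where the hypothesis $d \ge 3$ is essential and is the step I expect to require the most care. A vertex in $A \cap B$ would be adjacent to both $w_0$ and $w_d$, forcing $\dist_G(w_0, w_d) \le 2$ and contradicting $d \ge 3$, so $A \cap B = \emptyset$; and each $w_j$ with $2 \le j \le d-2$ is at distance at least $2$ from both $w_0$ and $w_d$, hence lies in neither $A$ nor $B$. Since these $d-3$ interior vertices are themselves non-free, I obtain $s(G) \ge |A| + |B| + (d-3) \ge 2q + d - 3 \ge d + q - 1$, which is exactly the desired inequality. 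The only genuinely delicate points are checking that $A$ (and $B$) is a separator of $w_0$ and $w_d$ and confirming the disjointness, both of which become routine once the layered distance structures from $w_0$ and from $w_d$ are in place.
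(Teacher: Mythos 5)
Your proof is correct, and it takes a genuinely different route from the paper's. The paper argues by contradiction: it implicitly invokes the already-known non-strict bound $f(G)+\diam(G)\le n+2-\kappa(G)$ (obtained in the Introduction from the depth inequalities of \cite{BN} and \cite{RSK}), assumes equality, deletes the non-free vertices lying off a diametral path so as to keep connectivity at least $2$, and then reaches a contradiction by producing a shortest detour around $u_1$ whose second vertex is free, forcing a shortcut. Your argument is instead a direct, self-contained count of non-free vertices: the two distance-layer sets $A$ and $B$ around the endpoints of a diametral shortest path are separators (so $|A|,|B|\ge\kappa(G)$ straight from the definition of vertex connectivity, with no need even for Menger's theorem), they consist of non-free vertices, they are disjoint from each other and from the middle path vertices $w_2,\dots,w_{d-2}$ precisely because $d\ge 3$, and the tally $n-f(G)\ge 2\kappa(G)+\diam(G)-3$ follows; all the steps you flagged as delicate do check out. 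What each approach buys: the paper's proof is short once the algebraic depth results are granted, and its deletion-plus-shortcut technique is of a piece with the First and Second Steps used later for the classification; your proof avoids any appeal to the binomial edge ideal results, re-derives the non-strict bound combinatorially along the way (one separator plus the path interior already gives $n-f(G)\ge \kappa(G)+\diam(G)-2$), and proves a strictly sharper quantitative statement, namely $f(G)+\diam(G)\le n+3-2\kappa(G)$, which reduces to the lemma exactly when $\kappa(G)=2$ and improves on it for larger connectivity.
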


\begin{proof}
Suppose on contrary that $f(G) + \diam(G) = n + 2 - \kappa(G)$.
Let $d = \diam(G) \geq 3$ and let
\[
v, u_1, u_2, \ldots, u_{d-1}, w
\]
be a path between $v$ and $w$ for which each of $u_i$ is non-free.  

Now, the number of non-free vertices of $G$ is
\[
n - f(G) = \diam(G) + \kappa(G) - 2 = (d - 1) + (\kappa(G) - 1).
\]
Let $x_1, \ldots, x_q$ be the non-free vertices of $G$ belonging to $$[n] \setminus \{v, u_1, u_2, \ldots, u_{d-1}, w\}.$$  Then $q \leq \kappa(G) - 1$.  

Let $G'$ denote the subgraph obtained from $G$ by removing $x_1, \ldots, x_{q-1}$. If $q\geq 2$, then it follows that 
\[
\kappa(G') \geq \kappa(G) - (q - 1) \geq \kappa(G) - (\kappa(G) - 2) = 2.
\]
In particular, when $q \in \{0, 1\}$, one has $G' = G$. 
Since each of the free vertices of $G$ remains free in $G'$, it follows that each non-free vertex of $G'$ belongs to $\{x_q, v, u_1, u_2, \ldots, u_{d-1}, w\}$.  Since $d \geq 3$, we have $\{x_q, v\} \not\in E(G)$ or $\{x_q, w\} \not\in E(G)$. We may assume that $\{x_q, v\} \not\in E(G)$.  Since $\kappa(G') \geq 2$, there is a neighbor $v_0$ of $v$ with $v_0 \neq u_1$ and every neighbor $v_0$ of $v$ with $v_0 \neq u_1$ is free.  Since $\kappa(G') \geq 2$, the subgraph $G' - u_1$ is connected.  Let
\[
v, v_0, v_1, \ldots, v_{d'-2}, w
\]
be a shortest path between $v$ and $w$ in $G' - u_1$, where $d' \geq d \geq 3$.  However, since $v_0$ is a neighbor of $v$ and is free, one has $\{v, v_1\} \in E(G)$.  This contradicts the fact that the path $v, v_0, v_1, \ldots, v_{d'-2}, w$ is shortest. 
\end{proof}


\begin{Example}
\label{q=1}
{\rm
Let $d \geq 2$ and $f \geq 2$ be integers.  Let $P_{d+1}$ denote the path on $\{1, \ldots, d+1\}$ and $\Gamma$ the finite simple connected graph on $[n]$ with $n = d + 1 + f - 2$ obtained from $P_{d+1}$ by adding $f - 2$ leaves
\[
\{2, d + 2\}, \{2, d + 3\}, \ldots, \{2, d + 1 + f - 2\}
\]
to $P_{d+1}$.  It then follows that
\[
\kappa(\Gamma) = 1, \, \, \, f(\Gamma) = f, \, \, \, \diam(\Gamma) = d.
\]
Thus, in particular,
\[
f(\Gamma) + \diam(\Gamma) = n + 2 - \kappa(\Gamma).
\]
The graph $\Gamma$ is depicted in Figure~\ref{Gamma}.
}
\end{Example}

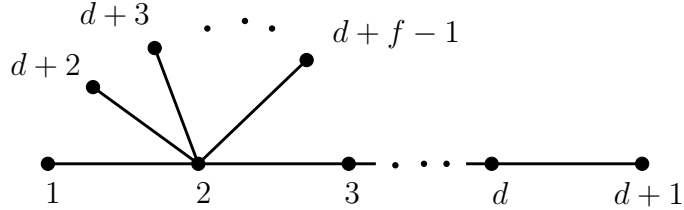
\begin{figure}[h!]
	\centering
	\begin{tikzpicture}[scale=1]
\draw [line width=1.1pt] (0,1)-- (2,1);
\draw [line width=1.1pt] (2,1)-- (4,1);
\draw [line width=1.1pt] (7.9,1)-- (5.9,1);
\draw [line width=1.1pt] (2,1)-- (1.42,2.54);
\draw [line width=1.1pt] (0.6,2.02)-- (2,1);
\draw [line width=1.1pt] (2,1)-- (3.44,2.38);
\draw [line width=1.1pt] (4,1)-- (4.36,1);
\draw [line width=1.1pt] (5.9,1)-- (5.56,1);
\draw (-0.18,0.9) node[anchor=north west] {$1$};
\draw (5.76,0.9) node[anchor=north west] {$d$};
\draw (7.38,0.9) node[anchor=north west] {$d+1$};
\draw (3.8,0.9) node[anchor=north west] {$3$};
\draw (1.82,0.9) node[anchor=north west] {$2$};
\draw (-0.64,2.62) node[anchor=north west] {$d+2$};
\draw (0.28,3.3) node[anchor=north west] {$d+3$};
\draw (3.64,3.06) node[anchor=north west] {$d+f-1$};
\begin{scriptsize}
	\draw [fill=black] (0,1) circle (2.5pt);
	\draw [fill=black] (2,1) circle (2.5pt);
	\draw [fill=black] (4,1) circle (2.5pt);
	\draw [fill=black] (5.9,1) circle (2.5pt);
	\draw [fill=black] (7.9,1) circle (2.5pt);
	\draw [fill=black] (4.62,0.98) circle (1pt);
	\draw [fill=black] (5,1) circle (1pt);
	\draw [fill=black] (5.3,1) circle (1pt);
	\draw [fill=black] (0.6,2.02) circle (2.5pt);
	\draw [fill=black] (1.42,2.54) circle (2.5pt);
	\draw [fill=black] (3.44,2.38) circle (2.5pt);
	\draw [fill=black] (2.12,2.8) circle (1pt);
	\draw [fill=black] (2.62,2.9) circle (1pt);
	\draw [fill=black] (2.98,2.8) circle (1pt);
\end{scriptsize}
\end{tikzpicture}
\caption{The graph $\Gamma$}
\label{Gamma}
\end{figure}

\medskip
Let $G$ and $H$ be two graphs on vertex sets $V(G)$ and $V(H)$, respectively, with $V(G)\cap V(H)=W$. Then the \emph{union} of $G$ and $H$, denoted by $G\cup H$, is the graph on the vertex set $V(G)\cup V(H)$ and the edge set $E(G)\cup E(H)$.

\begin{Example}
\label{d=2}
{\rm
Let $q \geq 2$, $s \geq 1$ and $t \geq 1$ be integers. Let 
$K_{q+s}$ denote the complete graph on $$\{1, \ldots, q, q+1, \ldots, q+s\},$$ 
and 
$K'_{q+t}$ the complete graph on $$\{1, \ldots, q, q+s+1, \ldots, q+s+t\}.$$
Let $\Omega = K_{q+s} \cup K'_{q+t}$ on $[n]$ with $n = q+s+t$.  It then follows that
\[
\kappa(\Omega) = q, \, \, \, f(\Omega) = s + t, \, \, \, \diam(\Omega) = 2.
\]
Thus in particular
\[
f(\Omega) + \diam(\Omega) = n + 2 - \kappa(\Omega).
\]
Denoting the complete graph on 
\[
\{1,\ldots,q\},
\]
by $K''_q$, a sketch of the graph $\Omega$ is depicted in Figure~\ref{omega}.
}
\end{Example}

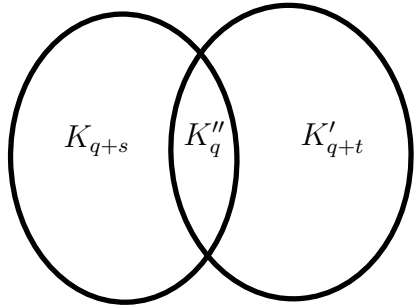
\begin{figure}[h!]
	\centering
	\begin{tikzpicture}[scale=0.9]
\draw (-8.1,1.03) node[anchor=north west] {$K_{q+s}$};
\draw (-4.6,1.09) node[anchor=north west] {$K'_{q+t}$};
\draw (-6.33,1.09) node[anchor=north west] {$K''_q$};
\draw [rotate around={-88.6980473274229:(-7.05,0.39)},line width=2pt] (-7.05,0.39) ellipse (2.13023970952485cm and 1.6717120625384387cm);
\draw [rotate around={-87.25191181994722:(-4.58,0.48)},line width=2pt] (-4.58,0.48) ellipse (2.1701435390987793cm and 1.7729701013531438cm);
\end{tikzpicture}
\caption{The graph $\Omega$}
\label{omega}
\end{figure}

\begin{Theorem}
\label{Boston}
Given integers $n \geq 3$, $q \geq 1$, $f \geq 0$ and $d \geq 1$, satisfying $$f + d = n + 2 - q,$$ there exists a finite simple non-complete connected graph $G$ on $[n] = \{1, \ldots, n\}$ with $$q = \kappa(G), \, \, \, f = f(G), \, \, \, d = \diam(G)$$ if and only if one has either
$$n \geq 3, \, \, \, q = 1, \, \, \, f \geq 2, \, \, \, d \geq 2$$
or
$$n \geq 4, \, \, \, q \geq 2, \, \, \, f \geq 2, \, \, \, d = 2.$$
\end{Theorem}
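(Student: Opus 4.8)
The plan is to prove the biconditional by treating its two implications separately, with the arithmetic conditions (Case A: $q=1$; Case B: $q\ge 2$) on one side and the existence of $G$ on the other. One structural fact is free throughout: since $G$ is non-complete and connected it contains two non-adjacent vertices, so $d=\diam(G)\ge 2$ in every case, which already accounts for the constraint $d\ge 2$.

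For the sufficiency direction (``if'') I would invoke the two explicit constructions already in hand. If $q=1$, $f\ge 2$, $d\ge 2$ with $f+d=n+1$, then feeding these values of $d$ and $f$ into Example~\ref{q=1} produces the graph $\Gamma$ on $n=d+f-1$ vertices realizing $(n,1,f,d)$. If $q\ge 2$, $f\ge 2$, $d=2$ with $f=n-q$, then writing $f=s+t$ with $s,t\ge 1$ (possible precisely because $f\ge 2$) and feeding $q,s,t$ into Example~\ref{d=2} produces $\Omega$ on $n=q+s+t=q+f$ vertices realizing $(n,q,f,2)$. The only point needing attention is that these two parametrizations hit every admissible tuple, which reduces to the short arithmetic checks $n=d+f-1\ge 3$ and $n=q+f\ge 4$ together with the decomposition $f=s+t$.

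For the necessity direction (``only if''), assume $G$ exists with $f+d=n+2-q$ and split on $\kappa(G)=q$. If $q\ge 2$, then Lemma~\ref{tehran} forbids $d\ge 3$, forcing $d=2$; moreover, since $G$ is non-complete, deleting the $n-2$ vertices other than a fixed non-adjacent pair disconnects $G$, whence $q=\kappa(G)\le n-2$. Together with the equality $f=n-q$ (obtained from $d=2$) this gives $f\ge 2$ and hence $n=q+f\ge 4$, which is exactly Case B. If $q=1$, I would fix a diameter-realizing path $v=v_0,v_1,\dots,v_d=w$; being a shortest path it has no chord joining $v_{i-1}$ to $v_{i+1}$, so each of its $d-1$ internal vertices is non-free. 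The equality $f+d=n+1$ says there are exactly $d-1$ non-free vertices in total, so the internal vertices must be all of them; in particular the two endpoints $v,w$, which are distinct since $d\ge 2$, are both free, yielding $f\ge 2$ and placing us in Case A.

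The main obstacle is the lower bound $f\ge 2$ in the $q=1$ branch: the whole argument hinges on the counting observation that, under the equality constraint, the $d-1$ necessarily non-free internal vertices of a shortest diameter path already exhaust the entire stock of non-free vertices, thereby pinning both endpoints as free. The $q\ge 2$ branch is comparatively routine once Lemma~\ref{tehran} and the bound $\kappa(G)\le n-2$ for non-complete graphs are invoked, and the sufficiency follows immediately from Examples~\ref{q=1} and~\ref{d=2}.
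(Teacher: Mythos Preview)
Your proof is correct and follows the paper's approach: necessity via Lemma~\ref{tehran} (together with $d\ge 2$ for non-complete graphs), sufficiency via Examples~\ref{q=1} and~\ref{d=2}. Your explicit verification that $f\ge 2$ in both cases---via $\kappa(G)\le n-2$ when $q\ge 2$, and via counting the $d-1$ non-free internal vertices of a diameter path when $q=1$---is more detailed than the paper's terse proof and in fact anticipates the counting argument the paper deploys later in its ``First Step'' of Section~2.
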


\begin{proof}
By virtue of Lemma \ref{tehran}, what we must show is the existence of a finite simple non-complete connected graph $\Gamma$ on $[n]$ with
\[
n \geq 3, \, \, \, \kappa(\Gamma) = 1, \, \, \, f(\Gamma) \geq 2, \, \, \, \diam(\Gamma) \geq 2, \, \, \, f(\Gamma) + \diam(\Gamma) = n + 1
\]
as well as of a finite simple non-complete connected graph $\Omega$ on $[n]$ with
\[
n \geq 4, \, \, \, \kappa(\Omega) \geq 2, \, \, \, f(\Omega) \geq 2, \, \, \, \diam(\Omega) = 2, \, \, \, f(\Omega) = n - \kappa(\Omega).
\]
Now, the existence of $\Gamma$ and $\Omega$ is guaranteed by Examples \ref{q=1} and \ref{d=2}. \, \, \, \, \, \, \, \, \, 
\end{proof}

\section{Classification}
Now, Theorem \ref{Boston} makes us turn to the problem of classifying finite simple non-complete connected graphs $G$ on $[n] = \{1, \ldots, n\}$ satisfying $$f(G) + \diam(G) = n + 2 - \kappa(G).$$

Recall that an induced path (resp. cycle) in a graph $G$ is a path (resp. cycle) which is an induced subgraph of $G$. A \emph{chordal} graph is a graph which has no induced cycle of length bigger than~3. By a well-known result due to Dirac~\cite{Di}, (see also \cite[Theorem~9.2.12]{HH}), a graph is chordal if and only if it admits a \emph{perfect elimination ordering}, that is its vertices can be labeled as $1,\ldots,n$ such that for all $j\in [n]$, the set $C_j=\{i: i\leq j\}$ induces a clique in $G$, (see also \cite[p.~172]{HH}).  

\medskip
In the following two steps, we try to find out more about what the two possible situations mentioned in Theorem~\ref{Boston} imply. 

\medskip
   
\noindent
{\bf (First Step)}\, $n \geq 3, \, \, \, \kappa(G) = 1, \, \, \, f(G) \geq 2, \, \, \, \diam(G) \geq 2$.

Let $d = \diam(G) \geq 2$ and let 
\[
v, u_1, u_2, \ldots, u_{d-1}, w
\]
be a path between $v$ and $w$ for which each of $u_i$ is non-free.  Since  
\[
f(G) = n + 1 - \diam(G) = n - (\diam(G) - 1),
\]
each vertex of $G$ belonging to $[n] \setminus \{u_1, u_2, \ldots, u_{d-1}\}$ is free.  If $v_0 \in [n]$ is free and $v_0$ is a neighbor of neither $v$ nor $w$, then the shortest path between $v$ and $v_0$ must be of the form
\[
v, u_1, u_2, \ldots, u_{i-1}, v_0.
\]
Thus in particular $v_0$ is a neighbor of $u_{i-1}$. The only other vertex in $\{u_1, u_2, \ldots, u_{d-1}\}$ which can be a neighbor of $v_0$ is $u_{i}$. 
If $v_0$ is a neighbor of $v$, then $v_0$ is a neighbor of $u_1$, and $u_1$ is the only neighbor of $v_0$ in $\{u_1,\ldots,u_{d-1}\}$. 
As a result, each vertex of $G$ belonging to $[n] \setminus \{u_1, u_2, \ldots, u_{d-1}\}$ is free and is a neighbor of at most two (consecutive) $u_{i}$'s. 
It then turns out that $G$ has a perfect elimination ordering, 
and hence $G$ is a chordal graph \cite[Theorem~9.2.12]{HH}.      

\medskip

\noindent
{\bf (Second Step)}\, $n \geq 4, \, \, \, \kappa(G) \geq 2, \, \, \, f(G) \geq 2, \, \, \, \diam(G) = 2$.

Let $q = \kappa(G)$.  Let $v$ and $w$ be vertices of $G$ with $\{v, w\} \not\in E(G)$.  It follows (\cite[p.~76]{B}) that there exist $q$ independent (i.e. vertex-disjoint) induced paths $P_1, \ldots, P_q$ between $v$ and $w$.  Let
\[
P_i : v, u^{(i)}_1, \ldots, u^{(i)}_{j_i - 1}, w.
\]
Each $j_i \geq 2$ and each $u^{(i)}_j$ is non-free.  Since the number of free vertices of $G$ is $f(G) = n - \kappa(G) = n - q$, it follows that the number of non-free vertices is equal to $q$.  Hence each $j_i = 2$.  Let $u_1, \ldots, u_q$ denote the non-free vertices of $G$ and
\[
P_i : v, u_i, w.
\]
Each vertex belonging to $[n] \setminus \{u_1, \ldots, u_q\}$ is free and a neighbor of each $u_i$.  By \cite[Theorem~9.2.12]{HH}, it then turns out that $G$ has a perfect elimination ordering
and that $G$ is a chordal graph.  

\medskip
From the two above steps, it follows that:
\begin{Theorem}
\label{chordal}
Every finite simple non-complete connected graph $G$ on $[n]$  satisfying $f(G) + \diam(G) = n + 2 - \kappa(G)$ is chordal.
\end{Theorem}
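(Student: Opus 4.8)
The plan is to invoke Theorem~\ref{Boston}, which guarantees that any non-complete connected $G$ with $f(G)+\diam(G)=n+2-\kappa(G)$ falls into exactly one of the two regimes treated in the First and Second Steps, and then to exhibit in each regime a perfect elimination ordering of the vertices of $G$; chordality then follows immediately from Dirac's criterion \cite[Theorem~9.2.12]{HH}. The elementary observation I would record at the outset is that a vertex of $G$ is free precisely when the set of its neighbors induces a clique, so that the free vertices are exactly the simplicial vertices of $G$. Consequently, in any ordering that places all free vertices before all non-free vertices, every free vertex automatically satisfies the elimination condition, since the neighbors following it form a subset of its (complete) neighborhood, and deleting earlier simplicial vertices can only shrink this neighborhood. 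The whole problem thus reduces to ordering the non-free vertices, listed last, so that the subgraph they span is traversed by a perfect elimination ordering; equivalently, to checking that $G$ restricted to its non-free vertices is chordal.

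In the regime $\kappa(G)=1$ of the First Step, the non-free vertices are exactly the internal vertices $u_1,\ldots,u_{d-1}$ of a diametral path. Since a shortest path is induced, $u_1,\ldots,u_{d-1}$ span an induced path $P_{d-1}$, which is chordal, and the path order $u_1,\ldots,u_{d-1}$ is itself a perfect elimination ordering of it, as the only later neighbor of $u_i$ is $u_{i+1}$. Prepending the free vertices in any order then yields a perfect elimination ordering of $G$.

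In the regime $\kappa(G)\ge 2$, $\diam(G)=2$ of the Second Step, the non-free vertices $u_1,\ldots,u_q$ have the property that every free vertex is adjacent to all of them. Since $f(G)\ge 2$ there is at least one free vertex $x$, and as the neighbors of $x$ induce a clique while $\{u_1,\ldots,u_q\}\subseteq N_G(x)$, the vertices $u_1,\ldots,u_q$ must be pairwise adjacent; that is, the non-free vertices span a complete graph. Listing the free vertices first and then $u_1,\ldots,u_q$ in any order is therefore a perfect elimination ordering, and $G$ is chordal.

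The substantive work has in fact already been carried out in the First and Second Steps, where the precise adjacency pattern of the non-free vertices is extracted; this structural analysis is the real obstacle. I expect the remaining assembly — noting that free vertices are simplicial, that deleting a simplicial vertex preserves both simpliciality of the others and chordality, and that an induced path and a complete graph are chordal — to be routine.
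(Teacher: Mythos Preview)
Your proposal is correct and follows the same approach as the paper: invoke Theorem~\ref{Boston} to reduce to the two regimes, then use the structural information from the First and Second Steps to exhibit a perfect elimination ordering. Your write-up is in fact more explicit than the paper's, which simply asserts that a perfect elimination ordering exists; your observation that the $u_i$ in the Second Step are pairwise adjacent (as common neighbors of a free vertex) is a clean way to see it.
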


Next, we determine which chordal graphs satisfy the desired properties, by giving the precise classification. For this purpose, first we construct two different types of families of graphs which play an important role in the classification. 

\medskip
Let us fix a notation. If $\mathcal{T}$ is a family of finite simple graphs, then we set
\[
V(\mathcal{T}):=\cup_{G\in \mathcal{T}} V(G).
\] 

Also, recall that the set of neighbors (i.e. adjacent vertices) of a vertex $v$ in a graph $G$ is denoted by $N_G(v)$. 

\medskip
\noindent
{\bf The families $\mathcal{G}_d$:} Let $d\geq 2$ be an integer, and let $P$ be the path of length~$d$ given as 
\[
v,u_1,\ldots,u_{d-1},w.
\] 
Let $H^v$ and $H^w$ be the complete graphs isomorphic to $K_t$ and $K_s$, respectively, with $t,s\geq 0$. Also suppose that $\mathcal{H}_1,\ldots,\mathcal{H}_{d-1}$ as well as $\mathcal{H}_{1,2},\mathcal{H}_{2,3},\ldots,\mathcal{H}_{d-2,d-1}$ are some families of complete graphs on disjoint sets of vertices such that $|V(\mathcal{H}_i)|\geq 0$ and $|V(\mathcal{H}_{j,j+1})|\geq 0$ for each $i=1,\ldots,d-1$ and $j=1,\ldots,d-2$. Then we construct a graph $H$ on the vertex set 
\[
\{v,u_1,\ldots,u_{d-1},w\}\cup V(H^v)\cup V(H^w)\cup (\bigcup_{i=1}^{d-1}V(\mathcal{H}_i))\cup (\bigcup_{j=1}^{d-2}V(\mathcal{H}_{j,j+1})),
\]     
such that 
\[
N_H(v)=\{u_1\}\cup V({H}^v),
\]

\[
N_H(w)=\{u_{d-1}\}\cup V({H}^w),
\]

\[
N_H(u_1)=\{v,u_1\}\cup V(H^v)\cup V(\mathcal{H}_1)\cup V(\mathcal{H}_{1,2}),
\]

\[
N_H(u_{d-1})=\{w,u_{d-2}\}\cup V(H^w)\cup V(\mathcal{H}_{d-1})\cup V(\mathcal{H}_{d-2,d-1}),
\]
and 
\[
N_H(u_i)=\{u_{i-1},u_{i+1}\}\cup V(\mathcal{H}_{i})\cup V(\mathcal{H}_{i-1,i})\cup V(\mathcal{H}_{i,i+1}),
\]
for all $i=2,\ldots, d-2$. The induced subgraph of $H$ on the vertex set 
\[
(\bigcup_{i=1}^{d-1}V(\mathcal{H}_i))\cup (\bigcup_{j=1}^{d-2}V(\mathcal{H}_{j,j+1}))
\] 
is just a disjoint union of some complete graphs which are exactly the elements of the families $\mathcal{H}_i$'s and $\mathcal{H}_{j,j+1}$'s. In particular, the path $P$ is an induced path of $H$. 

Now, let $\mathcal{G}_d$ be the family of such graphs~$H$ constructed as above. Then, it is easily seen that for any $H\in \mathcal{G}_d$, one has: 
\[
V(H)\geq 3, \, \, \, \mathrm{diam}(H)=d\geq 2, \, \, \, \kappa(H)=1, \, \, \, f(H)\geq 2, \, \, \  f(H)=|V(H)|-(d-1).
\]

\begin{Example}\label{Example-G7}
	{\em 
	The graph depicted in Figure~\ref{G7} belongs to the family $\mathcal{G}_7$ of graphs. Using the notation of our construction, in this case, we have: 
	\begin{itemize}
		\item $H^v$ is isomorphic to the complete graph $K_2$;
		\item $H^w$ is the empty graph (i.e. the graph with no vertices and no edges);
		\item $\mathcal{H}_1$ consists of two elements, each of them isomorphic to $K_1$;
		\item $\mathcal{H}_5$ consists of four elements, one isomorphic to $K_2$, and three isomorphic to~$K_1$;
		\item $\mathcal{H}_{2,3}$ consists of two elements, one isomorphic to $K_2$, and one isomorphic to~$K_1$;
		\item $\mathcal{H}_{3,4}$ consists of one elements isomorphic to $K_2$;
		\item $\mathcal{H}_{i}$ and $\mathcal{H}_{j,j+1}$ are empty for all other $i$ and $j$ which are not mentioned above. 
	\end{itemize}
}
\end{Example}  

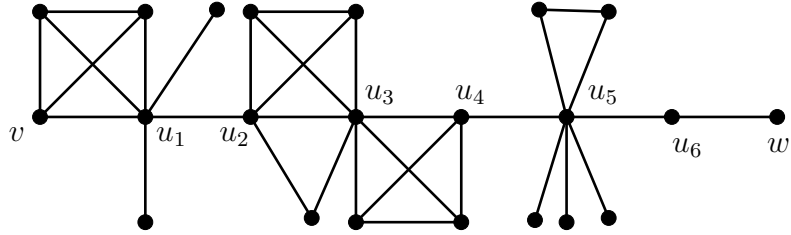
\begin{figure}[h!]
	\centering
	\begin{tikzpicture}[scale=1.4]
\draw [line width=1pt] (-7,2)-- (-7,1);
\draw [line width=1pt] (-7,2)-- (-6,2);
\draw [line width=1pt] (-6,2)-- (-6,1);
\draw [line width=1pt] (-6,1)-- (-7,1);
\draw [line width=1pt] (-6,2)-- (-7,1);
\draw [line width=1pt] (-7,2)-- (-6,1);
\draw [line width=1pt] (-6,1)-- (-5.32,2.02);
\draw [line width=1pt] (-6,1)-- (-6,0);
\draw [line width=1pt] (-6,1)-- (-5,1);
\draw [line width=1pt] (-5,1)-- (-4,1);
\draw [line width=1pt] (-4,1)-- (-3,1);
\draw [line width=1pt] (-3,1)-- (-2,1);
\draw [line width=1pt] (-2,1)-- (-1,1);
\draw [line width=1pt] (-1,1)-- (0,1);
\draw [line width=1pt] (-5,2)-- (-4,2);
\draw [line width=1pt] (-4,1)-- (-4,2);
\draw [line width=1pt] (-5,2)-- (-5,1);
\draw [line width=1pt] (-5,1)-- (-4,2);
\draw [line width=1pt] (-5,2)-- (-4,1);
\draw [line width=1pt] (-4,1)-- (-4.42,0.04);
\draw [line width=1pt] (-4.42,0.04)-- (-5,1);
\draw [line width=1pt] (-4,1)-- (-4,0);
\draw [line width=1pt] (-4,0)-- (-3,0);
\draw [line width=1pt] (-3,0)-- (-3,1);
\draw [line width=1pt] (-4,0)-- (-3,1);
\draw [line width=1pt] (-4,1)-- (-3,0);
\draw [line width=1pt] (-2.26,2.02)-- (-1.6,2);
\draw [line width=1pt] (-2,1)-- (-2.26,2.02);
\draw [line width=1pt] (-1.6,2)-- (-2,1);
\draw [line width=1pt] (-2,1)-- (-2,0);
\draw [line width=1pt] (-2,1)-- (-2.3,0.02);
\draw [line width=1pt] (-2,1)-- (-1.6,0.04);
\draw (-7.4,1) node[anchor=north west] {$v$};
\draw (-6,1) node[anchor=north west] {$u_1$};
\draw (-5.4,1) node[anchor=north west] {$u_2$};
\draw (-4.02,1.4) node[anchor=north west] {$u_3$};
\draw (-3.16,1.4) node[anchor=north west] {$u_4$};
\draw (-1.9,1.4) node[anchor=north west] {$u_5$};
\draw (-1.1,0.9) node[anchor=north west] {$u_6$};
\draw (-0.2,0.9) node[anchor=north west] {$w$};
\begin{scriptsize}
\draw [fill=black] (-7,2) circle (2pt);
\draw [fill=black] (-7,1) circle (2pt);
\draw [fill=black] (-6,2) circle (2pt);
\draw [fill=black] (-6,1) circle (2pt);
\draw [fill=black] (-5,1) circle (2pt);
\draw [fill=black] (-4,1) circle (2pt);
\draw [fill=black] (-3,1) circle (2pt);
\draw [fill=black] (-2,1) circle (2pt);
\draw [fill=black] (-1,1) circle (2pt);
\draw [fill=black] (0,1) circle (2pt);
\draw [fill=black] (-5.32,2.02) circle (2pt);
\draw [fill=black] (-6,0) circle (2pt);
\draw [fill=black] (-4.42,0.04) circle (2pt);
\draw [fill=black] (-5,2) circle (2pt);
\draw [fill=black] (-4,2) circle (2pt);
\draw [fill=black] (-4,0) circle (2pt);
\draw [fill=black] (-3,0) circle (2pt);
\draw [fill=black] (-2.3,0.02) circle (2pt);
\draw [fill=black] (-2,0) circle (2pt);
\draw [fill=black] (-1.6,0.04) circle (2pt);
\draw [fill=black] (-2.26,2.02) circle (2pt);
\draw [fill=black] (-1.6,2) circle (2pt);
\end{scriptsize}
\end{tikzpicture}
	\caption{A graph in the family $\mathcal{G}_7$}
	\label{G7}
\end{figure}

All the graphs, with up to five vertices, in the families $\mathcal{G}_2$ and $\mathcal{G}_3$ are shown in Figure~\ref{G2} and Figure~\ref{G3}, respectively. 

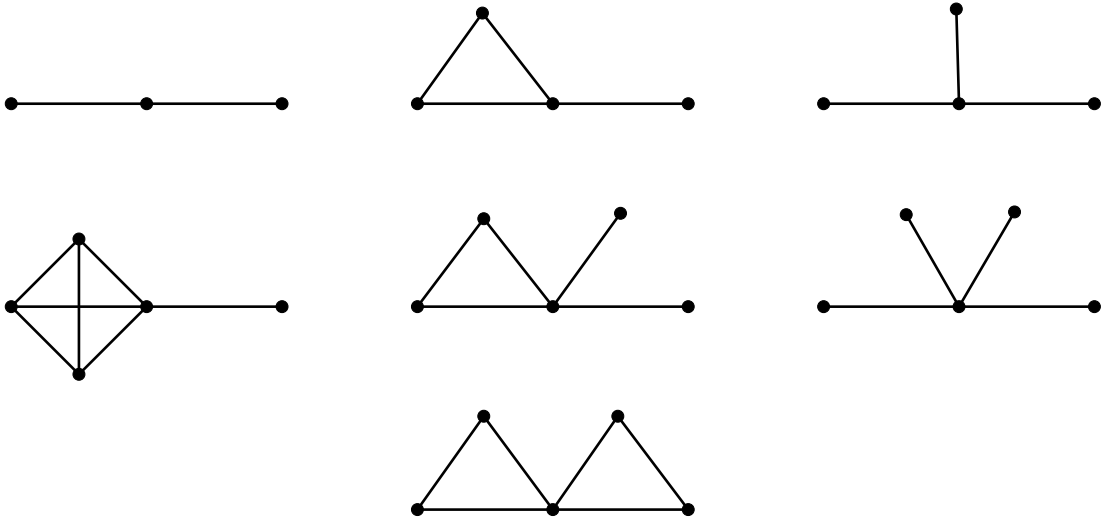
\begin{figure}[h!]
	\centering
	\begin{tikzpicture}[scale=0.9]
\draw [line width=1pt] (0,1)-- (2,1);
\draw [line width=1pt] (2,1)-- (4,1);
\draw [line width=1pt] (6.96,2.34)-- (6,1);
\draw [line width=1pt] (6,1)-- (8,1);
\draw [line width=1pt] (8,1)-- (6.96,2.34);
\draw [line width=1pt] (8,1)-- (10,1);
\draw [line width=1pt] (12,1)-- (14,1);
\draw [line width=1pt] (14,1)-- (16,1);
\draw [line width=1pt] (14,1)-- (13.96,2.4);
\draw [line width=1pt] (1,-1)-- (0,-2);
\draw [line width=1pt] (0,-2)-- (1,-3);
\draw [line width=1pt] (1,-3)-- (2,-2);
\draw [line width=1pt] (2,-2)-- (1,-1);
\draw [line width=1pt] (1,-1)-- (1,-3);
\draw [line width=1pt] (0,-2)-- (2,-2);
\draw [line width=1pt] (2,-2)-- (4,-2);
\draw [line width=1pt] (6.98,-0.7)-- (6,-2);
\draw [line width=1pt] (8,-2)-- (6,-2);
\draw [line width=1pt] (6.98,-0.7)-- (8,-2);
\draw [line width=1pt] (8,-2)-- (9,-0.62);
\draw [line width=1pt] (8,-2)-- (10,-2);
\draw [line width=1pt] (12,-2)-- (14,-2);
\draw [line width=1pt] (14,-2)-- (16,-2);
\draw [line width=1pt] (13.22,-0.64)-- (14,-2);
\draw [line width=1pt] (14.82,-0.6)-- (14,-2);
\draw [line width=1pt] (6,-5)-- (6.98,-3.62);
\draw [line width=1pt] (6.98,-3.62)-- (8,-5);
\draw [line width=1pt] (8,-5)-- (6,-5);
\draw [line width=1pt] (8,-5)-- (8.96,-3.62);
\draw [line width=1pt] (8.96,-3.62)-- (10,-5);
\draw [line width=1pt] (10,-5)-- (8,-5);
\begin{scriptsize}
\draw [fill=black] (0,1) circle (2.5pt);
\draw [fill=black] (2,1) circle (2.5pt);
\draw [fill=black] (4,1) circle (2.5pt);
\draw [fill=black] (6,1) circle (2.5pt);
\draw [fill=black] (8,1) circle (2.5pt);
\draw [fill=black] (10,1) circle (2.5pt);
\draw [fill=black] (6.96,2.34) circle (2.5pt);
\draw [fill=black] (12,1) circle (2.5pt);
\draw [fill=black] (14,1) circle (2.5pt);
\draw [fill=black] (16,1) circle (2.5pt);
\draw [fill=black] (13.96,2.4) circle (2.5pt);
\draw [fill=black] (0,-2) circle (2.5pt);
\draw [fill=black] (1,-1) circle (2.5pt);
\draw [fill=black] (2,-2) circle (2.5pt);
\draw [fill=black] (1,-3) circle (2.5pt);
\draw [fill=black] (4,-2) circle (2.5pt);
\draw [fill=black] (6,-2) circle (2.5pt);
\draw [fill=black] (8,-2) circle (2.5pt);
\draw [fill=black] (10,-2) circle (2.5pt);
\draw [fill=black] (6.98,-0.7) circle (2.5pt);
\draw [fill=black] (9,-0.62) circle (2.5pt);
\draw [fill=black] (12,-2) circle (2.5pt);
\draw [fill=black] (14,-2) circle (2.5pt);
\draw [fill=black] (16,-2) circle (2.5pt);
\draw [fill=black] (13.22,-0.64) circle (2.5pt);
\draw [fill=black] (14.82,-0.6) circle (2.5pt);
\draw [fill=black] (6,-5) circle (2.5pt);
\draw [fill=black] (6.98,-3.62) circle (2.5pt);
\draw [fill=black] (8,-5) circle (2.5pt);
\draw [fill=black] (8.96,-3.62) circle (2.5pt);
\draw [fill=black] (10,-5) circle (2.5pt);
\end{scriptsize}
\end{tikzpicture}
\caption{The graphs with up to five vertices in $\mathcal{G}_2$}
\label{G2}
\end{figure}

\begin{figure}[h!]
	\centering
	\begin{tikzpicture}[scale=0.9]
\draw [line width=1pt] (-3,2)-- (-1,2);
\draw [line width=1pt] (-1,2)-- (1,2);
\draw [line width=1pt] (1,2)-- (3,2);
\draw [line width=1pt] (5,2)-- (7,2);
\draw [line width=1pt] (7,2)-- (9,2);
\draw [line width=1pt] (9,2)-- (11,2);
\draw [line width=1pt] (7,2)-- (6.98,3.36);
\draw [line width=1pt] (-3,-1)-- (-1,-1);
\draw [line width=1pt] (-1,-1)-- (1,-1);
\draw [line width=1pt] (1,-1)-- (3,-1);
\draw [line width=1pt] (-3,-1)-- (-2,0.36);
\draw [line width=1pt] (-2,0.36)-- (-1,-1);
\draw [line width=1pt] (5,-1)-- (7,-1);
\draw [line width=1pt] (7,-1)-- (9,-1);
\draw [line width=1pt] (9,-1)-- (11,-1);
\draw [line width=1pt] (7,-1)-- (8,0.32);
\draw [line width=1pt] (8,0.32)-- (9,-1);
\begin{scriptsize}
\draw [fill=black] (-3,2) circle (2.5pt);
\draw [fill=black] (-1,2) circle (2.5pt);
\draw [fill=black] (1,2) circle (2.5pt);
\draw [fill=black] (3,2) circle (2.5pt);
\draw [fill=black] (5,2) circle (2.5pt);
\draw [fill=black] (7,2) circle (2.5pt);
\draw [fill=black] (9,2) circle (2.5pt);
\draw [fill=black] (11,2) circle (2.5pt);
\draw [fill=black] (6.98,3.36) circle (2.5pt);
\draw [fill=black] (-3,-1) circle (2.5pt);
\draw [fill=black] (-1,-1) circle (2.5pt);
\draw [fill=black] (1,-1) circle (2.5pt);
\draw [fill=black] (3,-1) circle (2.5pt);
\draw [fill=black] (-2,0.36) circle (2.5pt);
\draw [fill=black] (5,-1) circle (2.5pt);
\draw [fill=black] (7,-1) circle (2.5pt);
\draw [fill=black] (9,-1) circle (2.5pt);
\draw [fill=black] (11,-1) circle (2.5pt);
\draw [fill=black] (8,0.32) circle (2.5pt);
\end{scriptsize}
\end{tikzpicture}
\caption{The graphs with up to five vertices in $\mathcal{G}_3$}
\label{G3}
\end{figure}
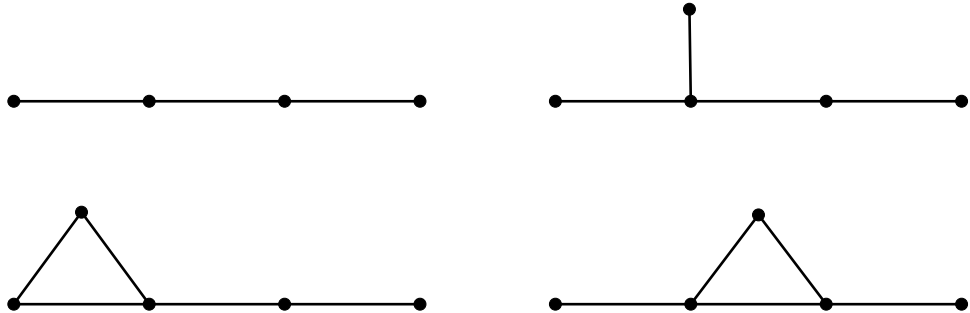

Before constructing the next families, we need to recall the concept of the \emph{join product} of graphs. Let $H_1$ and $H_2$ be two graphs on disjoint sets of vertices $V(H_1)$ and $V(H_2)$. Then the join product of $H_1$ and $H_2$, denoted by $H_1*H_2$, is
the graph with the vertex set $V(H_1)\cup V(H_2)$, and the edge set
\[
E(H_1)\cup E(H_2)\cup \{\{v,w\}~:~v\in V(H_1),~w\in V(H_2)\}.
\]

\medskip
\noindent
{\bf The families $\mathcal{F}_q$:} Suppose that $q\geq 2$ is an integer. Let $\mathcal{F}_q$ be the family of graphs consisting of all graphs of the form 
\[
K_q*(F_1\cup \cdots \cup F_p)
\]   
with $p\geq 2$, where $K_q$ is the complete graph on $q$ vertices, and for each $i=1,\ldots,p$, the graph $F_i$ is the complete graph isomorphic to $K_{\ell_i}$ with $\ell_i\geq 1$ and 
\[
V(F_i)\cap V(F_j)=\emptyset
\] 
for each $i,j$ with $1\leq i<j\leq p$. Then, by our construction, it is easily seen that for any graph $G\in \mathcal{F}_q$, one has: 
\[
|V(G)|\geq 4, \, \, \, \mathrm{diam}(G)=2, \, \, \, \kappa(G)=q\geq 2, \, \, \, f(G)\geq 2, \, \, \  f(G)=|V(G)|-q.
\]  

\begin{Example}\label{Example-F3}
	{\em 
		The graph shown in Figure~\ref{F3} belongs to the family $\mathcal{F}_3$ of graphs. In this case, we have the join product of $K_3$ and the three graphs~$F_1$, $F_2$ (which are isomorphic to $K_1$) and $F_3$ (which is isomorphic to $K_2$), as shown in Figure~\ref{F3}. 
	}
\end{Example}

\definecolor{qqqqff}{rgb}{0,0,1}
\definecolor{fuqqzz}{rgb}{0.9568627450980393,0,0.6}
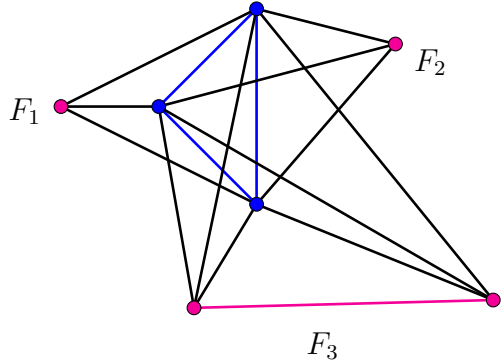
\begin{figure}[h!]
	\centering
	\begin{tikzpicture}[scale=1.3]
\draw [line width=1pt,color=qqqqff] (2,2)-- (1,1);
\draw [line width=1pt,color=qqqqff] (1,1)-- (2,0);
\draw [line width=1pt,color=qqqqff] (2,2)-- (2,0);
\draw [line width=1pt] (0,1)-- (1,1);
\draw [line width=1pt] (0,1)-- (2,2);
\draw [line width=1pt] (0,1)-- (2,0);
\draw [line width=1pt] (3.42,1.64)-- (2,2);
\draw [line width=1pt] (3.42,1.64)-- (2,0);
\draw [line width=1pt] (3.42,1.64)-- (1,1);
\draw [line width=1pt,color=fuqqzz] (1.36,-1.06)-- (4.42,-0.98);
\draw [line width=1pt] (4.42,-0.98)-- (2,0);
\draw [line width=1pt] (1.36,-1.06)-- (2,0);
\draw [line width=1pt] (1.36,-1.06)-- (1,1);
\draw [line width=1pt] (1.36,-1.06)-- (2,2);
\draw [line width=1pt] (4.42,-0.98)-- (1,1);
\draw [line width=1pt] (4.42,-0.98)-- (2,2);
\draw (-0.65,1.2) node[anchor=north west] {$F_1$};
\draw (3.5,1.7) node[anchor=north west] {$F_2$};
\draw (2.4,-1.2) node[anchor=north west] {$F_3$};
\begin{scriptsize}
\draw [fill=fuqqzz] (0,1) circle (2pt);
\draw [fill=qqqqff] (1,1) circle (2pt);
\draw [fill=qqqqff] (2,2) circle (2pt);
\draw [fill=qqqqff] (2,0) circle (2pt);
\draw [fill=fuqqzz] (3.42,1.64) circle (2pt);
\draw [fill=fuqqzz] (1.36,-1.06) circle (2pt);
\draw [fill=fuqqzz] (4.42,-0.98) circle (2pt);
\end{scriptsize}
\end{tikzpicture}
\caption{A graph in the family $\mathcal{F}_3$}
\label{F3}
\end{figure}

All the graphs with up to six vertices in the families $\mathcal{F}_2$ and $\mathcal{F}_3$ are shown in Figure~\ref{F2} and Figure~\ref{F3-1}, respectively. 

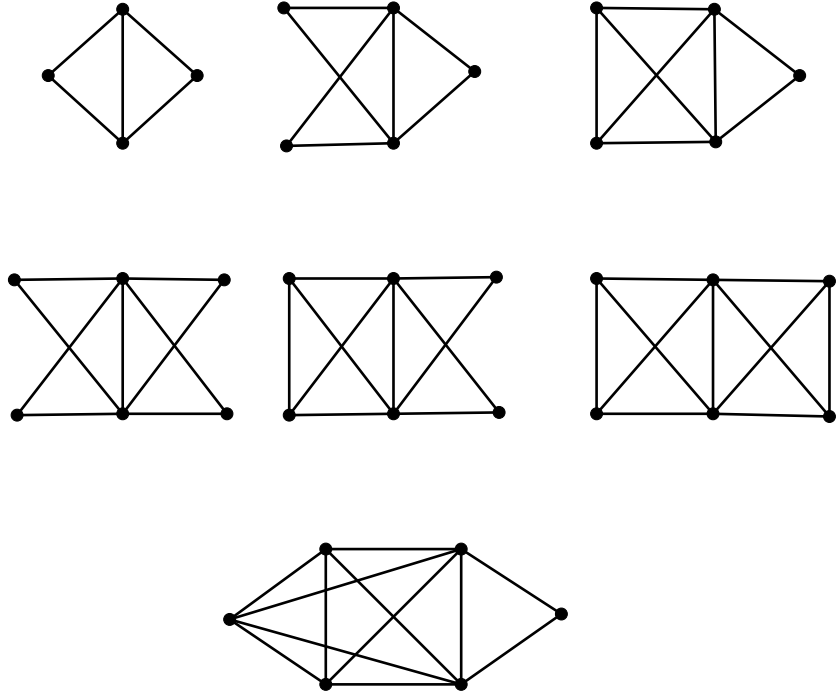
\begin{figure}[h!]
	\centering
	\begin{tikzpicture}[scale=0.9]
	\draw [line width=1pt] (-2.1,3)-- (-1,3.98);
	\draw [line width=1pt] (-2.1,3)-- (-1,2);
	\draw [line width=1pt] (-1,3.98)-- (-1,2);
	\draw [line width=1pt] (-1,2)-- (0.1,3);
	\draw [line width=1pt] (-1,3.98)-- (0.1,3);
	\draw [line width=1pt] (3,4)-- (3,2);
	\draw [line width=1pt] (1.42,1.96)-- (3,4);
	\draw [line width=1pt] (1.42,1.96)-- (3,2);
	\draw [line width=1pt] (3,4)-- (1.38,4);
	\draw [line width=1pt] (1.38,4)-- (3,2);
	\draw [line width=1pt] (4.2,3.06)-- (3,4);
	\draw [line width=1pt] (4.2,3.06)-- (3,2);
	\draw [line width=1pt] (6,2)-- (6,4);
	\draw [line width=1pt] (6,4)-- (7.74,3.98);
	\draw [line width=1pt] (7.76,2.02)-- (6,2);
	\draw [line width=1pt] (7.76,2.02)-- (7.74,3.98);
	\draw [line width=1pt] (6,2)-- (7.74,3.98);
	\draw [line width=1pt] (6,4)-- (7.76,2.02);
	\draw [line width=1pt] (9,3)-- (7.74,3.98);
	\draw [line width=1pt] (9,3)-- (7.76,2.02);
	\draw [line width=1pt] (-1,0)-- (-1,-2);
	\draw [line width=1pt] (-2.6,-0.02)-- (-1,0);
	\draw [line width=1pt] (-2.6,-0.02)-- (-1,-2);
	\draw [line width=1pt] (-1,-2)-- (-2.56,-2.02);
	\draw [line width=1pt] (-2.56,-2.02)-- (-1,0);
	\draw [line width=1pt] (-1,0)-- (0.5,-0.02);
	\draw [line width=1pt] (0.5,-0.02)-- (-1,-2);
	\draw [line width=1pt] (-1,-2)-- (0.54,-2);
	\draw [line width=1pt] (0.54,-2)-- (-1,0);
	\draw [line width=1pt] (3,0)-- (3,-2);
	\draw [line width=1pt] (1.46,-2.02)-- (1.46,0);
	\draw [line width=1pt] (1.46,0)-- (3,0);
	\draw [line width=1pt] (3,-2)-- (1.46,-2.02);
	\draw [line width=1pt] (1.46,-2.02)-- (3,0);
	\draw [line width=1pt] (1.46,0)-- (3,-2);
	\draw [line width=1pt] (3,-2)-- (4.56,-1.98);
	\draw [line width=1pt] (4.56,-1.98)-- (3,0);
	\draw [line width=1pt] (3,0)-- (4.52,0.02);
	\draw [line width=1pt] (4.52,0.02)-- (3,-2);
	\draw [line width=1pt] (6,0)-- (6,-2);
	\draw [line width=1pt] (6,-2)-- (7.72,-2);
	\draw [line width=1pt] (7.72,-2)-- (7.72,-0.02);
	\draw [line width=1pt] (7.72,-0.02)-- (6,0);
	\draw [line width=1pt] (6,0)-- (7.72,-2);
	\draw [line width=1pt] (6,-2)-- (7.72,-0.02);
	\draw [line width=1pt] (7.72,-0.02)-- (9.44,-0.04);
	\draw [line width=1pt] (9.44,-0.04)-- (9.44,-2.04);
	\draw [line width=1pt] (9.44,-2.04)-- (7.72,-2);
	\draw [line width=1pt] (7.72,-2)-- (9.44,-0.04);
	\draw [line width=1pt] (7.72,-0.02)-- (9.44,-2.04);
	\draw [line width=1pt] (4,-4)-- (4,-6);
	\draw [line width=1pt] (4,-4)-- (2,-4);
	\draw [line width=1pt] (2,-4)-- (2,-6);
	\draw [line width=1pt] (2,-6)-- (4,-6);
	\draw [line width=1pt] (2,-4)-- (0.58,-5.04);
	\draw [line width=1pt] (0.58,-5.04)-- (2,-6);
	\draw [line width=1pt] (4,-4)-- (5.48,-4.96);
	\draw [line width=1pt] (5.48,-4.96)-- (4,-6);
	\draw [line width=1pt] (0.58,-5.04)-- (4,-4);
	\draw [line width=1pt] (0.58,-5.04)-- (4,-6);
	\draw [line width=1pt] (2,-4)-- (4,-6);
	\draw [line width=1pt] (2,-6)-- (4,-4);
	\begin{scriptsize}
	\draw [fill=black] (-2.1,3) circle (2.5pt);
	\draw [fill=black] (-1,3.98) circle (2.5pt);
	\draw [fill=black] (-1,2) circle (2.5pt);
	\draw [fill=black] (0.1,3) circle (2.5pt);
	\draw [fill=black] (3,4) circle (2.5pt);
	\draw [fill=black] (3,2) circle (2.5pt);
	\draw [fill=black] (1.42,1.96) circle (2.5pt);
	\draw [fill=black] (1.38,4) circle (2.5pt);
	\draw [fill=black] (4.2,3.06) circle (2.5pt);
	\draw [fill=black] (6,2) circle (2.5pt);
	\draw [fill=black] (6,4) circle (2.5pt);
	\draw [fill=black] (7.74,3.98) circle (2.5pt);
	\draw [fill=black] (7.76,2.02) circle (2.5pt);
	\draw [fill=black] (9,3) circle (2.5pt);
	\draw [fill=black] (-1,0) circle (2.5pt);
	\draw [fill=black] (-1,-2) circle (2.5pt);
	\draw [fill=black] (-2.6,-0.02) circle (2.5pt);
	\draw [fill=black] (-2.56,-2.02) circle (2.5pt);
	\draw [fill=black] (0.5,-0.02) circle (2.5pt);
	\draw [fill=black] (0.54,-2) circle (2.5pt);
	\draw [fill=black] (3,0) circle (2.5pt);
	\draw [fill=black] (3,-2) circle (2.5pt);
	\draw [fill=black] (1.46,-2.02) circle (2.5pt);
	\draw [fill=black] (1.46,0) circle (2.5pt);
	\draw [fill=black] (4.56,-1.98) circle (2.5pt);
	\draw [fill=black] (4.52,0.02) circle (2.5pt);
	\draw [fill=black] (6,0) circle (2.5pt);
	\draw [fill=black] (6,-2) circle (2.5pt);
	\draw [fill=black] (7.72,-2) circle (2.5pt);
	\draw [fill=black] (7.72,-0.02) circle (2.5pt);
	\draw [fill=black] (9.44,-0.04) circle (2.5pt);
	\draw [fill=black] (9.44,-2.04) circle (2.5pt);
	\draw [fill=black] (4,-4) circle (2.5pt);
	\draw [fill=black] (4,-6) circle (2.5pt);
	\draw [fill=black] (2,-4) circle (2.5pt);
	\draw [fill=black] (2,-6) circle (2.5pt);
	\draw [fill=black] (0.58,-5.04) circle (2.5pt);
	\draw [fill=black] (5.48,-4.96) circle (2.5pt);
	\end{scriptsize}
	\end{tikzpicture}
	\caption{The graphs with up to six vertices in $\mathcal{F}_2$}
	\label{F2}
\end{figure}

\begin{figure}[h!]
	\centering
	\begin{tikzpicture}[scale=1.1]
	\draw [line width=1pt] (-1,3)-- (-2,2);
	\draw [line width=1pt] (0,2)-- (-2,2);
	\draw [line width=1pt] (-1,3)-- (0,2);
	\draw [line width=1pt] (-2.24,3.22)-- (-2,2);
	\draw [line width=1pt] (-2.24,3.22)-- (-1,3);
	\draw [line width=1pt] (-2.24,3.22)-- (0,2);
	\draw [line width=1pt] (0.2,3.26)-- (-1,3);
	\draw [line width=1pt] (0.2,3.26)-- (0,2);
	\draw [line width=1pt] (0.2,3.26)-- (-2,2);
	\draw [line width=1pt] (4,3)-- (3,2);
	\draw [line width=1pt] (5,2)-- (3,2);
	\draw [line width=1pt] (5,2)-- (4,3);
	\draw [line width=1pt] (5.22,3.22)-- (4,3);
	\draw [line width=1pt] (5.22,3.22)-- (5,2);
	\draw [line width=1pt] (5.22,3.22)-- (3,2);
	\draw [line width=1pt] (2.8,3.2)-- (3,2);
	\draw [line width=1pt] (2.8,3.2)-- (5,2);
	\draw [line width=1pt] (2.8,3.2)-- (4,3);
	\draw [line width=1pt] (4,1)-- (5,2);
	\draw [line width=1pt] (4,1)-- (3,2);
	\draw [line width=1pt] (4,1)-- (4,3);
	\draw [line width=1pt] (8,2)-- (8,3);
	\draw [line width=1pt] (8.98,3.5)-- (8,3);
	\draw [line width=1pt] (8.98,3.5)-- (10,3);
	\draw [line width=1pt] (10,3)-- (10,2);
	\draw [line width=1pt] (10,2)-- (8,2);
	\draw [line width=1pt] (9,1)-- (10,3);
	\draw [line width=1pt] (8,2)-- (10,3);
	\draw [line width=1pt] (9,1)-- (10,2);
	\draw [line width=1pt] (9,1)-- (8,2);
	\draw [line width=1pt] (8,3)-- (10,2);
	\draw [line width=1pt] (8,3)-- (10,3);
	\draw [line width=1pt] (8.98,3.5)-- (8,2);
	\draw [line width=1pt] (8.98,3.5)-- (10,2);
	\begin{scriptsize}
	\draw [fill=black] (-1,3) circle (2.5pt);
	\draw [fill=black] (-2,2) circle (2.5pt);
	\draw [fill=black] (0,2) circle (2.5pt);
	\draw [fill=black] (-2.24,3.22) circle (2.5pt);
	\draw [fill=black] (0.2,3.26) circle (2.5pt);
	\draw [fill=black] (4,3) circle (2.5pt);
	\draw [fill=black] (3,2) circle (2.5pt);
	\draw [fill=black] (5,2) circle (2.5pt);
	\draw [fill=black] (5.22,3.22) circle (2.5pt);
	\draw [fill=black] (2.8,3.2) circle (2.5pt);
	\draw [fill=black] (4,1) circle (2.5pt);
	\draw [fill=black] (8,2) circle (2.5pt);
	\draw [fill=black] (8,3) circle (2.5pt);
	\draw [fill=black] (8.98,3.5) circle (2.5pt);
	\draw [fill=black] (10,3) circle (2.5pt);
	\draw [fill=black] (10,2) circle (2.5pt);
	\draw [fill=black] (9,1) circle (2.5pt);
	\end{scriptsize}
\end{tikzpicture}
\caption{The graphs with up to six vertices in $\mathcal{F}_3$}
\label{F3-1}
\end{figure}
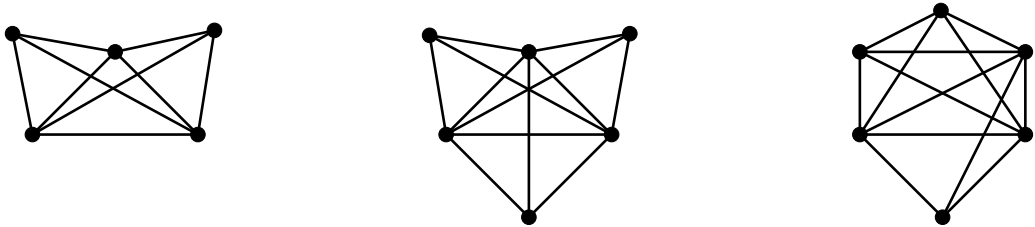
	
\medskip
Theorem~\ref{Boston} together with the {\bf{First Step}} and the {\bfseries{Second Step}} imply that any finite simple connected graph $G$ with $n\geq 3$ vertices for which $f(G)+\mathrm{diam}(G)=n+2-\kappa(G)$, is a graph in the family $\mathcal{G}_{\mathrm{diam}(G)}$ or the family $\mathcal{F}_{\kappa(G)}$. Therefore, we get the following desired classification: 
   
\begin{Theorem}
	\label{classification}
	Let $G$ be a finite simple non-complete connected graph on 
	$[n]$. Then the following statements are equivalent:
	\begin{enumerate}
		\item[{\em(a)}] $f(G) + \diam(G) = n + 2 - \kappa(G)$.
		\item[{\em(b)}]$G\in \mathcal{G}_{\mathrm{diam}(G)}$ or $G\in \mathcal{F}_{\kappa(G)}$.
	\end{enumerate}
\end{Theorem}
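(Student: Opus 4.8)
The plan is to prove the equivalence by establishing the two implications separately, treating (b) $\Rightarrow$ (a) as the routine direction and (a) $\Rightarrow$ (b) as the substantive one. For (b) $\Rightarrow$ (a) there is essentially nothing to do beyond the parameter computations already recorded with the two constructions: if $G \in \mathcal{G}_d$ then $\kappa(G) = 1$, $\diam(G) = d$ and $f(G) = |V(G)| - (d-1)$, whence $f(G) + \diam(G) = |V(G)| + 1 = |V(G)| + 2 - \kappa(G)$; and if $G \in \mathcal{F}_q$ then $\kappa(G) = q$, $\diam(G) = 2$ and $f(G) = |V(G)| - q$, whence $f(G) + \diam(G) = |V(G)| + 2 - q = |V(G)| + 2 - \kappa(G)$. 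In both cases $G$ is connected and non-complete by construction (an induced path of length $d \geq 2$ in the first case, and two non-adjacent vertices lying in distinct $F_i$ in the second), so (a) holds.

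For (a) $\Rightarrow$ (b), I would first invoke Theorem~\ref{Boston} (or directly Lemma~\ref{tehran} together with non-completeness) to split into the two admissible regimes: either $\kappa(G) = 1$ and $\diam(G) = d \geq 2$, or $\kappa(G) = q \geq 2$ and $\diam(G) = 2$. The common engine is a single structural observation about free vertices: since the set of neighbors of any free vertex is a clique, the subgraph induced on the free vertices contains no induced path on three vertices and is therefore a disjoint union of cliques; moreover, if $x$ and $y$ lie in the same such clique and $x$ is adjacent to a non-free vertex $u$, then $u \in N_G(x)$ and $y \in N_G(x)$ force $u$ adjacent to $y$, so the set of non-free neighbors is constant along each free clique. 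Thus the free vertices decompose into cliques, each attached to the non-free vertices in a uniform way.

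In the case $\kappa(G) = 1$, I would take the diameter-realizing path $v, u_1, \ldots, u_{d-1}, w$ supplied by the First Step, whose interior vertices $u_1, \ldots, u_{d-1}$ are exactly the non-free vertices and form an induced path. The First Step already pins down the attachment of each free vertex to this spine: the clique $C_v$ containing $v$ meets the spine only in $u_1$ (since along a geodesic $v$ has no spine-neighbor other than $u_1$), the clique $C_w$ containing $w$ meets it only in $u_{d-1}$, and every remaining free clique is attached either to a single $u_i$ or to two consecutive vertices $u_j, u_{j+1}$. Declaring $H^v$ and $H^w$ to be $C_v$ and $C_w$ with $v$ and $w$ removed, and sorting the other free cliques into the families $\mathcal{H}_i$ and $\mathcal{H}_{j,j+1}$ according to their attachment, one checks that the resulting data realizes $G$ exactly as a member of $\mathcal{G}_d$. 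In the case $\kappa(G) = q \geq 2$, I would use the Second Step: the non-free vertices $u_1, \ldots, u_q$ are adjacent to every free vertex, and since $v$ is free its neighborhood is a clique, forcing $u_1, \ldots, u_q$ to be pairwise adjacent, i.e.\ to span a $K_q$. The free cliques $F_1, \ldots, F_p$ produced by the structural observation are each joined completely to this $K_q$, and $p \geq 2$ because the non-adjacent endpoints $v, w$ lie in different cliques; hence $G = K_q * (F_1 \cup \cdots \cup F_p) \in \mathcal{F}_q$.

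I expect the main obstacle to be the bookkeeping in the case $\kappa(G) = 1$: one must verify that no free clique attaches to three or more spine vertices or to two non-consecutive ones, and handle the two distinguished endpoint cliques $C_v, C_w$ correctly, so that the assembled families reproduce all of, and only, the edges of $G$. This is exactly where the detailed adjacency analysis of the First Step is used, converting its local statements into the global clique-attachment decomposition demanded by the definition of $\mathcal{G}_d$; the case $\kappa(G) \geq 2$ is comparatively immediate once the structural observation on free vertices is in hand.
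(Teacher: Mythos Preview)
Your proposal is correct and follows essentially the same route as the paper: split via Theorem~\ref{Boston} into the two regimes, then use the First Step and Second Step analyses to place $G$ in $\mathcal{G}_{\diam(G)}$ or $\mathcal{F}_{\kappa(G)}$, with the reverse implication coming directly from the parameter computations attached to the two constructions. Your write-up is in fact more detailed than the paper's, which simply asserts that the First and Second Steps together with Theorem~\ref{Boston} yield (a)$\Rightarrow$(b); your structural observation that adjacent free vertices share the same closed neighborhood (hence the free part is a disjoint union of cliques with constant non-free attachment), and your remark that the $u_i$ in the Second Step span a $K_q$ because $v$ is free, make explicit precisely the bookkeeping the paper leaves to the reader.
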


\medskip
Returning to an initial motivation arising from the depth of binomial edge ideals, we have the following corollary:

\begin{Corollary}\label{depth}
 Let $d\geq 2$ and $q\geq 2$ be integers. If $G\in \mathcal{G}_d$ and 
 $H\in \mathcal{F}_q$ are graphs with $n$ vertices, then 
 \[
 \depth (S/J_G)=n+1,  
 \]
 \[
 \depth (S/J_H)=n+2-q.
 \]
\end{Corollary}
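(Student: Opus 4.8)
The plan is to deduce both equalities directly from the two-sided estimate
\[
f(G) + \diam(G) \leq \depth(S/J_G) \leq n + 2 - \kappa(G)
\]
recorded in the Introduction, which holds for every non-complete connected graph $G$ by \cite{BN} and \cite{RSK}. The entire content of the corollary is the observation that, for members of the two families $\mathcal{G}_d$ and $\mathcal{F}_q$, the lower and upper bounds in this sandwich coincide, so the estimate collapses to an equality.

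First I would collect the combinatorial invariants that were already established when the two families were constructed. For $G \in \mathcal{G}_d$ one has $\diam(G) = d \geq 2$, $\kappa(G) = 1$, and $f(G) = n - (d-1)$; for $H \in \mathcal{F}_q$ one has $\diam(H) = 2$, $\kappa(H) = q \geq 2$, and $f(H) = n - q$. In particular each such graph has diameter at least $2$, hence is non-complete, and is connected by construction, so the cited bounds are applicable.

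Then I would simply evaluate the outer terms. For $G \in \mathcal{G}_d$,
\[
f(G) + \diam(G) = \bigl(n - (d-1)\bigr) + d = n + 1 = n + 2 - \kappa(G),
\]
so the sandwich forces $\depth(S/J_G) = n + 1$. For $H \in \mathcal{F}_q$,
\[
f(H) + \diam(H) = (n - q) + 2 = n + 2 - q = n + 2 - \kappa(H),
\]
whence $\depth(S/J_H) = n + 2 - q$.

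I do not expect any genuine obstacle here: once the invariants of the families are recorded, all the arithmetic is routine, and the depth statement is purely a matter of the lower and upper homological bounds agreeing. The only step that deserves an explicit word is the verification that each member of $\mathcal{G}_d$ and $\mathcal{F}_q$ is really non-complete and connected, so that the estimate of \cite{BN} and \cite{RSK} may be invoked; this is immediate from $\diam \geq 2$ together with the explicit constructions of the two families.
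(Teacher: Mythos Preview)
Your proposal is correct and matches the paper's intended argument: the corollary is stated there without an explicit proof, relying exactly on the sandwich $f(G)+\diam(G)\leq \depth(S/J_G)\leq n+2-\kappa(G)$ together with the invariants recorded for the families $\mathcal{G}_d$ and $\mathcal{F}_q$, which make the two bounds coincide. Your write-up simply spells this out.
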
  

\begin{Remark}\label{Kumar}
	{\em 
		In the case of graphs in the families $\mathcal{F}_q$ for any $q\geq 2$, the formula given in Corollary~\ref{depth} for the depth of the binomial edge ideal also follows from \cite[Theorem~3.4 and Theorem~3.9]{KuSa}. 
	}
\end{Remark}


\begin{Remark}\label{block-closed}
	{\em 
		We would like to remark that some precise formulas for the depth of binomial edge ideals of some classes of chordal graphs are known in the literature. Among them are \emph{(generalized) block} graphs, (see~\cite{EHH} and \cite{KiSa}) and some subclasses of \emph{closed} graphs (also known as \emph{proper interval} graphs), (see~\cite{AlHo}). 
	
Although the families $\mathcal{G}_d$ of chordal graphs, for any $d\geq 2$, have intersections with the above classes, there are infinitely many chordal graphs in each $\mathcal{G}_d$ for $d\geq 3$ which do not belong to any of the above classes. Figure~\ref{G7} depicts an example of such graphs.     
}
\end{Remark}

\end{document}